
\documentclass[10pt]{amsart}
\usepackage{enumerate,amsmath,amssymb,latexsym,
amsfonts, amsthm, amscd, mathabx}


\setlength{\textwidth}{14.5cm}
\setlength{\textheight}{22cm}

\hoffset=-2.0cm
\voffset=-.8cm


\theoremstyle{plain}

\newtheorem{theorem}{Theorem}

\numberwithin{equation}{section}

\newcommand{\PP}{\mathbf{P}}
\newcommand{\PPi}{\mathbf{\Pi}}

\newcommand{\h}{\hash}

\addtocounter{section}{-1}


\begin{document}

\title {Projective Space: Harmonicity and Projectivity}

\date{}

\author[P.L. Robinson]{P.L. Robinson}

\address{Department of Mathematics \\ University of Florida \\ Gainesville FL 32611  USA }

\email[]{paulr@ufl.edu}

\subjclass{} \keywords{}

\begin{abstract}

For an axiomatization of three-dimensional projective space based on points and planes, we discuss appropriate versions of the harmonicity axiom and the projectivity axiom, showing that each axiom is equivalent to its spatial dual. 

\end{abstract}

\maketitle

\medbreak

\section*{Introduction} 

In a recent sequence of papers on three-dimensional projective space, we considered a self-dual axiomatic framework that is based on lines, with points and planes as derived elements: in [2] we established the framework and showed that it is equivalent to the classical Veblen-Young framework [6] with its assumptions of alignment and extension; in [3] we presented a version of the axiom of projectivity that is appropriate to this framework; and in [5] we treated likewise the axiom of harmonicity. In a related paper [4] on three-dimensional projective space, we set up an equivalent self-dual axiomatic framework that is based instead on points and planes, with lines as derived elements. In the present paper, we round out the discussion of this point-plane framework by exploring appropriate versions of harmonicity and projectivity; we demonstrate that each of these versions is equivalent to its spatial dual, so that the inclusion of these axioms maintains the principle of duality. 

\medbreak 

\section*{The Self-dual Framework}

\medbreak 

For convenience, we recall briefly the framework established in [2]. The disjoint nonempty sets $\PP$ of {\it points} and $\PPi$ of {\it planes} are together equipped with a relation $\h$ of {\it incidence}. When $O \in \PP$ and $\omega \in \PPi$ we write $O^{\h} = \{ \pi \in \PPi : O \h \pi \}$ and $\omega^{\h} = \{ P \in \PP : \omega \h P \}$. More generally: if $\mathbf{S} \subseteq \PP$ then $\mathbf{S}^{\h} \subseteq \PPi$ comprises all planes that are incident to each point in $\mathbf{S}$; if $\mathbf{\Sigma} \subseteq \PPi$ then $\mathbf{\Sigma}^{\h} \subseteq \PP$ comprises all points that are incident to each plane in $\mathbf{\Sigma}$. Moreover, $\mathbf{S} \h \mathbf{\Sigma}$ is the statement that $P \h \pi$ whenever $P \in \mathbf{S}$ and $\pi \in \mathbf{\Sigma}$. In these terms, the axioms for our framework are four in number, as follows. 

\medbreak 

\noindent 
AXIOM [1] \\ 
If $P \in \PP$ then $P^{\h} \ne \PPi$. \\ 
If $\pi \in \PPi$ then $\pi^{\h} \ne \PP$. 

\medbreak 

\noindent 
AXIOM [2] \\ 
If $A, B \in \PP$ then $| \{ A, B \}^{\h} | > 2$. \\ 
If $\alpha, \beta \in \PPi$ then  $| \{ \alpha, \beta \}^{\h} | > 2.$

\medbreak 

\noindent 
AXIOM [3] \\
If $A, B, C \in \PP$ then $\{ A, B, C \}^{\h} \ne \emptyset.$ \\
If $\alpha, \beta, \gamma \in \PPi$ then $\{ \alpha, \beta, \gamma \}^{\h} \ne \emptyset.$

\medbreak 

\noindent 
AXIOM [4] \\
Let $A \ne B$ in $\PP$ and $\alpha \ne \beta$ in $\PPi$. \\
 If $\{ A, B \} \h \{\alpha, \beta \}$ then $\{ A, B \}^{\h} = \{ \alpha, \beta \}^{\h \h}$ and $\{ \alpha, \beta \}^{\h} = \{ A, B \}^{\h \h}.$

\medbreak 

These axioms support our defining a {\it line} as a subset of $\PP \cup \PPi$ having the form 
$$\ell = \ell_{\PP} \cup \ell_{\PPi}$$
with 
$$\ell_{\PP} = \{ A, B \}^{\h \h} = \{ \alpha, \beta \}^{\h} , \; \; 
\ell_{\PPi} = \{ \alpha, \beta \}^{\h \h} = \{ A, B \}^{\h}$$
for $A \ne B$ in $\PP$ and $\alpha \ne \beta$ in $\PPi$ such that $\{ A, B \} \h \{\alpha, \beta \}$; for convenience, we may write $\ell = A B$ or $\ell = \alpha \beta$ in this case. Points or planes are said to be {\it collinear} precisely when they are all elements of one line: thus, $\mathbf{S} \subseteq \PP$ is a set of collinear points iff $\mathbf{S} \subseteq \ell_{\PP}$ for some line $\ell$; also,  $\mathbf{\Sigma} \subseteq \PPi$ is a set of collinear planes iff $\mathbf{\Sigma} \subseteq \ell_{\PPi}$ for some line $\ell$. 

\medbreak 

It is perhaps worth drawing attention to an apparent peculiarity that arises from our defining a line as a special subset of $\PP \cup \PPi$. Let $\ell$ be a line, $P$ a point and $\pi$ a plane: the statement that $P$ lies in $\ell$ is symbolized naturally by $P \in \ell$; the statement that $\ell$ lies in $\pi$ is symbolized perversely by $\pi \in \ell$. 

\medbreak 

Theorem 1 in [4] asserts that if $A, B, C$ are not collinear then $\{A, B, C \}^{\h}$ is a singleton: in more traditional terms, three non-collinear points lie together in a unique plane; conversely, if $\{A, B, C \}^{\h}$ is a singleton then of course $A, B, C$ are not collinear. Theorem 2 in [4] shows that if $m$ and $n$ are distinct lines then $m_{\PP} \cap n_{\PP} \ne \emptyset \Leftrightarrow m_{\PPi} \cap n_{\PPi} \ne \emptyset$: further, that if these equivalent conditions are satisfied then $|m_{\PP} \cap n_{\PP} | = 1$ and $|m_{\PPi} \cap n_{\PPi} | = 1$; we may then write $m_{\PP} \cap n_{\PP} = \{ m \cdot n \}$ and $m_{\PPi} \cap n_{\PPi} = \{ m \square n \}$. In short: if two distinct lines share a plane, then they share a unique point; if two distinct lines share a point, then they share a unique plane. 

\medbreak 

Incidentally, Coxeter refers to his version of Axiom (A3) from [6] as `Veblen's ingenious device for declaring that any two {\it coplanar} lines have a common point {\it before} defining a plane'; see page 16 of [1]. In a similar but milder vein, our AXIOM [3] and AXIOM [4] together serve to guarantee that two coplanar lines have a common point before defining a {\it line}. 

\medbreak 

Finally, we note that the principle of {\it duality} holds in our point-plane framework: if point and plane are interchanged in the statement of any theorem, the result is a theorem; this is automatic, as our axioms themselves are unaltered by the interchange. The notions of {\it point} and {\it plane} are dual, while {\it line} is a self-dual notion. 

\bigbreak 

\section*{Harmonicity} 

\medbreak 

The axiom of {\it harmonicity} [5] is expressed in Section 18 of [6] as
\medbreak  
ASSUMPTION H$_0$: the diagonal points of a complete quadrangle are noncollinear 
\medbreak 
\noindent
and is elsewhere referred to as the {\it Fano} axiom. Here, a complete quadrangle is the planar figure comprising four points $O, P, Q, R$, no three of which are collinear, along with the three diagonal points $A = OP \cdot QR, \; B = OQ \cdot RP, \; C = OR \cdot PQ$. This axiom is already in a form suitable for importation into the present point-plane framework of projective space. 

\medbreak 

Starting afresh, let $\pi \in \PPi$ be a plane on which $P_0, P_1, P_2, P_3 \in \pi^{\h}$ are points. We shall call $P_0 P_1 P_2 P_3$ a {\it complete quadrangle} iff no three of the four points are collinear: that is, iff $\{ P_i, P_j, P_k \}^{\h} = \{ \pi \}$ whenever $i, j, k \in \{ 0, 1, 2, 3 \}$ are distinct. The {\it diagonal points} of the complete quadrangle $P_0 P_1 P_2 P_3$ are $D_1, D_2, D_3 \in \pi^{\h}$ defined by 
$$\{ D_1 \} = \{ P_0, P_1 \}^{\h \h} \cap \{ P_2, P_3 \}^{\h \h},$$
$$\{ D_2 \} = \{ P_0, P_2 \}^{\h \h} \cap \{ P_3, P_1 \}^{\h \h},$$
$$\{ D_3 \} = \{ P_0, P_3 \}^{\h \h} \cap \{ P_1, P_2 \}^{\h \h}.$$
Note that these points are well-defined: for example, 
$$(P_1 P_2)_{\PPi} \cap (P_3 P_0)_{\PPi} =  \{ P_1, P_2 \}^{\h} \cap \{ P_3, P_0 \}^{\h} = \{ \pi \}$$
whence Theorem 2 of [4] ensures that 
$$| (P_1 P_2)_{\PP} \cap (P_3 P_0)_{\PP} | =  | \{ P_1, P_2 \}^{\h \h} \cap \{ P_3, P_0 \}^{\h \h} | = 1.$$
Theorem 1 of [4] ensures that requiring $D_1, D_2, D_3$ to be noncollinear is equivalent to requiring $\{ D_1, D_2, D_3 \}^{\h}$ to be a singleton. 

\medbreak 

With this preparation, our version of the harmonicity axiom reads as follows. 

\medbreak 

\noindent 
AXIOM [{\bf H}] \\ 
If $P_0 P_1 P_2 P_3$ is a complete quadrangle in the plane $\pi$ then $\{ D_1, D_2, D_3 \}^{\h} = \{ \pi \}.$

\medbreak 

As presented, this harmonicity axiom is not manifestly self-dual. In order to verify that the crucial principle of duality is preserved when AXIOM [1] - AXIOM [4] are augmented by AXIOM [{\bf H}], we shall demonstrate that AXIOM [{\bf H]} implies its own (spatial) dual. To this end, let $P \in \PP$ be a point and let $\pi_0, \pi_1, \pi_2, \pi_3$ be planes such that $\{ \pi_i, \pi_j, \pi_k \}^{\h} = \{ P \}$ whenever $i, j, k \in \{ 0, 1, 2, 3 \}$ are distinct; the corresponding diagonal planes are (dually) well-defined by 
$$\{ \delta_1 \} = \{ \pi_0, \pi_1 \}^{\h \h} \cap \{ \pi_2, \pi_3 \}^{\h \h},$$
$$\{ \delta_2 \} = \{ \pi_0, \pi_2 \}^{\h \h} \cap \{ \pi_3, \pi_1 \}^{\h \h},$$
$$\{ \delta_3 \} = \{ \pi_0, \pi_3 \}^{\h \h} \cap \{ \pi_1, \pi_2 \}^{\h \h}.$$
Assuming that AXIOM [{\bf H}] holds, our task is to prove that $\{ \delta_1, \delta_2, \delta_3 \}^{\h} = \{ P \}$. 

\medbreak 

We begin our proof by fixing a plane $\omega \notin P^{\h}$ as provided by AXIOM [1]. Our proof now runs through a sequence of claims. Diagrams illustrating the disposition of the various points and planes involved make these claims eminently plausible; we offer a proof of each within our point-plane framework. 

\medbreak 

\noindent
{\bf Claim 1}: If $i, j \in \{ 0, 1, 2, 3 \}$ are distinct then $\{ \omega, \pi_i, \pi_j \}^{\h}$ is a singleton $\{ P_{i j} \}.$ 

\medbreak 

[It cannot be that $\omega,  \pi_i$ and $\pi_j$ are collinear, for that would force $\{ \pi_i, \pi_j \}^{\h} \subseteq \omega^{\h}$, whereas $P \in \{ \pi_i, \pi_j \}^{\h}$ and $P \notin \omega^{\h}$ by choice. Now invoke Theorem 1 of [4].] 

\medbreak 

\noindent
{\bf Claim 2}: If $i, j \in \{ 0, 1, 2, 3 \}$ are distinct then $\{ \omega, \pi_i \}^{\h \h} \cap \{ \omega, \pi_j \}^{\h \h} = \{ \omega \}.$ 

\medbreak 

[The indicated intersection contains $\omega$. Notice that $\{ \omega, \pi_i \}^{\h} \cap \{ \omega, \pi_j \}^{\h} = \{ \omega, \pi_i, \pi_j \}^{\h} = \{ P_{i j} \}$ by Claim 1. Now invoke Theorem 2 of [4].]

\medbreak 

\noindent 
{\bf Claim 3}: If $i, j \in \{ 0, 1, 2, 3 \}$ are distinct then $\{ P, P_{i j} \}^{\h \h} = \{ \pi_i , \pi_j \}^{\h}.$ 

\medbreak 

[$P \ne P_{i j}$ because $P \notin \omega^{\h} \ni P_{i j}$; also, $\pi_i \ne \pi_j$. By hypothesis, $P \in \{ \pi_i, \pi_j \}^{\h}$; by Claim 1, $P_{i j} \in \{ \pi_i, \pi_j \}^{\h}$. Now invoke AXIOM [4].]

\medbreak 

\noindent 
{\bf Claim 4}: If $i, j, k \in \{ 0, 1, 2, 3 \}$ are distinct then $\{ P_{k i}, P_{k j} \}^{\h \h} = \{ \omega, \pi_k \}^{\h}$. 

\medbreak 

[The points $P_{k i}$ and $P_{k j}$ are distinct; each of them lies in both $\omega^{\h}$ and $\pi_k^{\h}$. Now invoke AXIOM [4] again.]

\medbreak 

\noindent
{\bf Claim 5}:  If $\{ i, j, k \} = \{ 1, 2, 3 \}$ then $\{ P, P_{i j} , P_{k 0} \}^{\h} = \{ \delta_k \}.$ 

\medbreak 

[Claim 3 implies that $ \{ \pi_i, \pi_j \}^{\h \h} = \{ P, P_{i j} \}^{\h}$ and $ \{ \pi_k, \pi_0 \}^{\h \h} = \{ P, P_{k 0} \}^{\h}$ from which follows $\{ \delta_k \} = \{ \pi_i, \pi_j \}^{\h \h} \cap \{ \pi_k, \pi_0 \}^{\h \h} = \{ P, P_{i j} \}^{\h} \cap \{ P, P_{k 0} \}^{\h} = \{ P, P_{i j}, P_{k 0} \}^{\h}$.]

\medbreak 

\noindent 
{\bf Claim 6}:  If $\{ i, j, k \} = \{ 1, 2, 3 \}$ then $\{ P_{i j} , P_{k 0} \}^{\h \h} = \{ \omega, \delta_k \}^{\h}.$ 

\medbreak 

[Each of $P_{i j}$ and $P_{k 0}$ lies in $\omega^{\h}$ by construction; each lies in $\delta_k^{\h}$ on account of Claim 5. Now invoke AXIOM [4] yet again.]

\medbreak 

\noindent
{\bf Claim 7}: If $\{ i, j, k \} = \{ 1, 2, 3 \}$ then $\{ P_{0 i} , P_{j k} \}^{\h \h} \cap \{ P_{0 j} , P_{k i} \}^{\h \h}$ is a singleton $\{ D_{i j} \}$. 

\medbreak 

[$\{ P_{0 i} , P_{j k} \}^{\h} \cap \{ P_{0 j} , P_{k i} \}^{\h} = \{ P_{0 i}, P_{0 j} \}^{\h} \cap \{P_{j k} , P_{k i} \}^{\h} = \{ \omega, \pi_0 \}^{\h \h} \cap \{ \omega, \pi_k \}^{\h \h} = \{ \omega \}$ by Claim 4 and Claim 2. Now Theorem 2 of [4] applies.] 

\medbreak 

\noindent 
{\bf Claim 8}: $P_{0 1} P_{0 2} P_{2 3} P_{3 1}$ is a complete quadrangle in the plane $\omega.$ 

\medbreak 

[Notice that $\{ P_{0 1}, P_{0 2} \}^{\h \h} \cap \{ P_{0 2}, P_{2 3} \}^{\h \h} = \{\omega, \pi_0, \pi_2 \}^{\h} = \{ P_{0 2} \}$ by Claim 4 and Claim 1. Now Theorem 2 of [4] shows that $\{ P_{0 1}, P_{0 2}, P_{2 3} \}^{\h} = \{ P_{0 1}, P_{0 2} \}^{\h} \cap \{ P_{0 2}, P_{2 3} \}^{\h}$ is a singleton. The other three triples succumb to similar arguments.]

\medbreak 

\noindent
{\bf Claim 9}: $\{ \delta_1, \delta_2, \delta_3 \}^{\h} = \{ P \}.$ 

\medbreak 

[$\{ \omega, \delta_1 \}^{\h} \cap \{ \omega, \delta_2 \}^{\h} \cap \{ \omega, \delta_3 \}^{\h} = \{ P_{0 1}, P_{2 3} \}^{\h \h} \cap \{ P_{0 2}, P_{3 1} \}^{\h \h} \cap \{ P_{0 3}, P_{1 2} \}^{\h \h}$ by Claim 6 so that $\omega^{\h} \cap \{ \delta_1, \delta_2, \delta_3 \}^{\h} = \{ D_{1 2} \} \cap  \{ P_{0 3}, P_{1 2} \}^{\h \h}$ by Claim 7. The complete quadrangle $P_{0 1} P_{0 2} P_{2 3} P_{3 1}$ of Claim 8 has diagonal points $D_{1 2}$, $ P_{1 2}$ and $P_{3 0}$. According to AXIOM [{\bf H}] these diagonal points are not collinear: thus $D_{1 2} \notin \{ P_{1 2}, P_{3 0} \}^{\h \h}$ and so $\omega^{\h} \cap \{ \delta_1, \delta_2, \delta_3 \}^{\h} = \emptyset$. If $\{ \delta_1, \delta_2, \delta_3 \}^{\h}$ were to contain more than one point, then it would follow that $\{ \delta_1, \delta_2, \delta_3 \}^{\h} = \{\delta_1, \delta_2 \}^{\h}$ and therefore that $\omega^{\h} \cap \{ \delta_1, \delta_2, \delta_3 \}^{\h} = \{ \omega, \delta_1, \delta_2 \}^{\h} \ne \emptyset$ by virtue of AXIOM [3]. Thus $ \{ \delta_1, \delta_2, \delta_3 \}^{\h}$ contains precisely one point, namely $P$.] 

\medbreak 

Our task is done. 

\medbreak 

\begin{theorem} 
{\rm AXIOM} {\rm [{\bf H}]} is equivalent to its dual. 
\end{theorem}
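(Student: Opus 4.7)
The plan is to recognize that one direction of the equivalence has already been carried out. Claims 1 through 9 of the preceding discussion take AXIOM [\textbf{H}] as a hypothesis and deduce $\{\delta_1,\delta_2,\delta_3\}^{\h}=\{P\}$ for an arbitrary point $P$ and arbitrary planes $\pi_0,\pi_1,\pi_2,\pi_3$ satisfying the dual quadrangle condition; this is exactly the statement that AXIOM [\textbf{H}] implies its spatial dual. So the only work remaining for the theorem is the converse implication, that the dual of AXIOM [\textbf{H}] implies AXIOM [\textbf{H}] itself.

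For the converse, my approach is to invoke the principle of duality already built into the framework rather than to repeat a long chain of claims. Each of AXIOM [1] through AXIOM [4] is stated as a pair of statements interchanged by the point-plane swap, so the base axioms are manifestly self-dual; Theorem 1 and Theorem 2 of [4] are likewise symmetric under the swap. A quick audit of the bracketed justifications of Claims 1--9 confirms that the only ingredients invoked are AXIOM [1], AXIOM [3], AXIOM [4], Theorem 1 and Theorem 2 of [4], and AXIOM [\textbf{H}]. Consequently the entire argument dualizes mechanically: replace every point with a plane and every plane with a point throughout, and replace the appeal to AXIOM [\textbf{H}] by an appeal to its dual. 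What emerges is a valid proof that the dual of AXIOM [\textbf{H}] forces AXIOM [\textbf{H}], which completes the equivalence.

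The main obstacle is conceptual rather than computational: one must be satisfied that no step in the proof of Claims 1--9 secretly breaks the point-plane symmetry of the background framework. Since AXIOM [1]--[4] and the cited theorems of [4] were designed precisely to preserve this symmetry, that verification is a matter of inspection. In writing up the proof I would therefore simply record that Claims 1--9 give one direction, note that the framework is self-dual and the proof uses only self-dual ingredients, and conclude the converse by dualization, so that the theorem requires essentially no further calculation.
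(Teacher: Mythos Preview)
Your proposal is correct and matches the paper's own proof essentially verbatim: the paper likewise notes that Claims 1--9 establish that AXIOM [\textbf{H}] implies its dual, and then invokes the principle of duality in the self-dual framework to obtain the converse. Your additional audit of the ingredients used in Claims 1--9 is a nice explicit check, but the underlying argument is the same.
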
 

\begin{proof} 
The claims leading up to the theorem establish that within our self-dual point-plane framework, AXIOM [{\bf H}] implies its own dual. By the principle of duality in our self-dual framework, it follows that the dual of [{\bf H}] implies [{\bf H}] itself. 
\end{proof} 

\medbreak 

The principle of duality here is spatial. Of course, AXIOM [{\bf H}] also has a planar dual: it states that the diagonal lines of a complete quadrilateral are nonconcurrent. 

\bigbreak 

\section*{Projectivity}

\medbreak 

The classical Veblen-Young formulation of projective geometry admits many equivalent versions of the axiom of {\it projectivity}. As the provisional `assumption of projectivity' in Section 35 of [6] it reads thus: `If a projectivity leaves each of three distinct points of a line invariant, it leaves every point of the line invariant'; this leads immediately to the fundamental theorem of projective geometry. In [3] it was natural to assume a version of the axiom that made reference only to lines and their abstract incidence; such a version was extracted from Section 103 of [6] and has the virtue of being manifestly self-dual. Here, we choose to employ another equivalent version,  namely the {\it Pappus} `theorem'; we do so partly in order to illustrate the process of translation between the traditional framework and the point-plane framework. 

\medbreak 

We prepare the way with some definitions; for variety, we use some traditional notation and terminology. Let the distinct lines $\ell_1$ and $\ell_2$ be incident, with $\ell_1 \cdot \ell_2 = O$ and $\ell_1 \square \ell_2 = \omega$. A simple hexagon alternately inscribed in the line-pair $(\ell_1, \ell_2)$ has vertices $A_1 B_2 C_1 A_2 B_1 C_2$ distinct from $O$ and from each other, with $A_1, B_1, C_1 \in \ell_1$ and $A_2, B_2, C_2 \in \ell_2$. The six sides of the hexagon fall into three pairs of opposites, thus: $B_1 C_2, B_2 C_1; \; C_1 A_2, C_2 A_1; \; A_1 B_2, A_2 B_1$. The classical Pappus `theorem' asserts the collinearity of the {\it cross-joins} 
$$A_0 = (B_1 C_2) \cdot ( B_2 C_1), \; B_0 = (C_1 A_2) \cdot ( C_2 A_1), \; C_0 = (A_1 B_2) \cdot (A_2 B_1).$$

\medbreak 

With this understanding, we state our version of the projectivity axiom as follows

\medbreak 

\noindent 
AXIOM [{\bf P}] \\ 
If a simple hexagon is alternately inscribed in an incident line-pair, then its pairs of opposite sides meet in collinear points. 

\medbreak 

As was the case for harmonicity, this axiom is not manifestly self-dual. Accordingly, we shall demonstrate that AXIOM [{\bf P}] implies its own (spatial) dual within our point-plane framework. Again let $(\ell_1, \ell_2)$ be an incident line-pair with  $\ell_1 \cdot \ell_2 = O$ and $\ell_1 \square \ell_2 = \omega$. Let $\alpha_1, \beta_1, \gamma_1, \alpha_2, \beta_2, \gamma_2$ be planes distinct from $\omega$ and from each other, with $\alpha_1, \beta_1, \gamma_1 \in \ell_1$ and $\alpha_2, \beta_2, \gamma_2 \in \ell_2$. Our task is to show collinearity of the planes 
$$\alpha_0 = (\beta_1 \gamma_2) \square (\beta_2 \gamma_1), \; \beta_0 = (\gamma_1 \alpha_2) \square (\gamma_2 \alpha_1), \; \gamma_0 = (\alpha_1 \beta_2) \square (\alpha_2 \beta_1).$$

\medbreak 

Our approach to this task will differ from the approach we took to the corresponding task in the previous section. Rather than proceed in detail strictly within our point-plane framework, we shall freely incorporate guiding comments from the traditional framework.

\medbreak 

Choose and fix a plane $\pi \notin O^{\h}$ by which to section the entire figure. In traditional terms, the trace of the line-pair  $(\ell_1, \ell_2)$ is a point-pair $(P_1, P_2)$ while the traces of the axial pencils of planes $\alpha_1, \beta_1, \gamma_1$ (with axis $\ell_1$) and $\alpha_2, \beta_2, \gamma_2$ (with axis $\ell_2$) are flat pencils of lines $a_1, b_1, c_1$ (with centre $P_1$) and $a_2, b_2, c_2$ (with centre $P_2$). In point-plane terms, note that if $k \in \{ 1, 2 \}$ then by collinearity
$$(\ell_k)_{\PP} = \{ \alpha_k, \beta_k, \gamma_k \}^{\h} = \{ \beta_k, \gamma_k \}^{\h} = \{ \gamma_k, \alpha_k \}^{\h} = \{ \alpha_k, \beta_k \}^{\h}$$
so that 
$$\{ P_k \} = \{ \pi, \beta_k, \gamma_k \}^{\h} = \{ \pi, \gamma_k, \alpha_k \}^{\h} = \{ \pi, \alpha_k, \beta_k \}^{\h}$$
while 
$$(a_k)_{\PP} = \{ \pi, \alpha_k \}^{\h}, \; (b_k)_{\PP} = \{ \pi, \beta_k \}^{\h}, \; (c_k)_{\PP} = \{ \pi, \gamma_k \}^{\h}.$$

\medbreak 

Notice that if $\pi_1 \in \ell_1$ and $\pi_2 \in \ell_2$ are distinct then $\{ \pi, \pi_1, \pi_2 \}^{\h}$ is a singleton: otherwise, $\pi, \pi_1$ and $\pi_2$ would be collinear so that $\{ \pi_1, \pi_2 \}^{\h} \subseteq \pi^{\h}$ whereas $O \in \{ \pi_1, \pi_2 \}^{\h}$ but $O \notin \pi^{\h}$. In particular, if $\{ i, j \} = \{ 1, 2 \}$ then $\{ \pi, \alpha_i, \beta_j \}^{\h}$ is a singleton: in fact, 
$$\{ \pi, \alpha_i, \beta_j \}^{\h} = \{ \pi, \alpha_i \}^{\h} \cap \{ \pi, \beta_j \}^{\h} = (a_i)_{\PP} \cap (b_j)_{\PP} = \{ a_i \cdot b_j \}.$$
We {\bf claim} further that the line joining $a_1 \cdot b_2$ and $a_2 \cdot b_1$ is given by 
$$((a_1 \cdot b_2) (a_2 \cdot b_1))_{\PP} = \{ a_1 \cdot b_2, a_2 \cdot b_1 \}^{\h \h} = \{ \pi, \gamma_0 \}^{\h}.$$
To see this, note that $\{ a_1 \cdot b_2, a_2 \cdot b_1 \} \h \{ \pi, \gamma_0 \}$: on the one hand, $a_i \cdot b_j \in \{ \pi, \alpha_i, \beta_j \}^{\h} \subseteq \pi^{\h}$; on the other hand, $\gamma_0 = (\alpha_1 \beta_2) \square (\alpha_2 \beta_1)$ so that $\gamma_0  \in (\alpha_i \beta_j)_{\PPi} = \{ \alpha_i, \beta_j \}^{\h \h}$ and therefore $a_i \cdot b_j \in \{ \pi, \alpha_i, \beta_j \}^{\h} \subseteq \{ \alpha_i, \beta_j \}^{\h} \subseteq \gamma_0^{\h}$. Our {\bf claim} now follows by AXIOM [4]. Of course, corresponding assertions apply to the lines $(b_1 \cdot c_2) (b_2 \cdot c_1)$ and $(c_1 \cdot a_2) (c_2 \cdot a_1)$. 

\medbreak 

We now follow the familiar approach to realizing that the planar dual of [{\bf P}] is essentially [{\bf P}] itself. Thus, we consider the pencils of points $P_1, b_2 \cdot c_1, c_1 \cdot a_2$ (on $c_1$) and $P_2, c_2 \cdot a_1, b_1 \cdot c_2$ (on $c_2$). AXIOM [{\bf P}] declares the collinearity of the cross-joins 
$$a_1 \cdot b_2, \; a_2 \cdot b_1, \; ((b_1 \cdot c_2) (b_2 \cdot c_1)) \cdot ((c_1 \cdot a_2) (c_2 \cdot a_1))$$
whence the point $S =  ((b_1 \cdot c_2) (b_2 \cdot c_1)) \cdot ((c_1 \cdot a_2) (c_2 \cdot a_1)) \ne O$ lies on each of the lines 
$$(b_1 \cdot c_2) (b_2 \cdot c_1), \; (c_1 \cdot a_2) (c_2 \cdot a_1), \; (a_1 \cdot b_2) (a_2 \cdot b_1)$$
and in each of the sets 
$$\{ \pi, \alpha_0 \}^{\h}, \; \{ \pi, \beta_0 \}^{\h}, \; \{ \pi, \gamma_0 \}^{\h}.$$
Finally, $\gamma_0 \in O^{\h}$ and $\gamma_0 \in S^{\h}$ (because $S \in \{ \pi, \gamma_0 \}^{\h} \subseteq \gamma_0^{\h}$) so that $\gamma_0 \in \{ O, S \}^{\h}$ while $\{ O, S \}^{\h}$ contains $\alpha_0$ and $\beta_0$ likewise: that is, the planes $\alpha_0, \beta_0, \gamma_0$ are collinear. 

\medbreak 

Our task is done. 

\medbreak 

\begin{theorem} 
{\rm AXIOM} {\rm [{\bf P}]} is equivalent to its dual. 
\end{theorem}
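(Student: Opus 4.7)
The plan is to follow the same strategy used for Theorem 1: the extended construction preceding the theorem statement already carries out the bulk of the work by showing directly that AXIOM [\textbf{P}] implies its spatial dual, so the proof of the theorem itself consists of recording this implication and then invoking the principle of duality (valid in our self-dual framework) to extract the reverse implication at no additional cost.

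Concretely, I would first convince myself that the preceding argument is sound as an implication ``AXIOM [\textbf{P}] $\Rightarrow$ (dual of AXIOM [\textbf{P}]).'' The key reduction is the choice of a sectioning plane $\pi \notin O^{\h}$ provided by AXIOM [1]; this replaces the two axial pencils on $\ell_1, \ell_2$ by flat pencils on $\pi$ centered at the trace points $P_1, P_2$, and turns the spatial cross-meet planes $\alpha_0, \beta_0, \gamma_0$ into lines of the form $\{\pi, \cdot\}^{\h}$ in $\pi$ via AXIOM [4]. Applying planar Pappus (AXIOM [\textbf{P}]) to an appropriate simple hexagon alternately inscribed in $(c_1, c_2)$ then yields a common point $S \ne O$ of the three section lines, and the line $OS$ then incorporates $\alpha_0, \beta_0, \gamma_0$ into a single axial pencil, giving their collinearity.

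The proof of the theorem itself is then two sentences, parallel to that of Theorem 1: the preceding claims show that AXIOM [\textbf{P}] implies its dual, and since AXIOMS [1]--[4] are themselves unchanged by the point-plane interchange, the principle of duality converts this implication into its converse; hence AXIOM [\textbf{P}] and its dual are logically equivalent within the framework.

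The main obstacle, if any, lies in checking the bookkeeping in the preceding construction rather than in the theorem's proof itself -- specifically, verifying that after sectioning by $\pi$ the planar cross-joins really do land on the sections of $\alpha_0, \beta_0, \gamma_0$ with matching labels, so that the hypothesis of planar Pappus is invoked for the correct hexagon. Once that correspondence is pinned down via AXIOM [4] (as the preceding \textbf{claim} asserts with $\{\pi, \gamma_0\}^{\h} = \{a_1 \cdot b_2,\, a_2 \cdot b_1\}^{\h\h}$, and its two companions), the theorem follows by pure invocation of duality.
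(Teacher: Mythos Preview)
Your proposal is correct and matches the paper's own proof essentially verbatim: the preceding construction (section by a plane $\pi \notin O^{\h}$, reduce to flat pencils, apply planar Pappus to the hexagon on $(c_1, c_2)$, obtain the common point $S$, and conclude $\alpha_0, \beta_0, \gamma_0 \in \{O, S\}^{\h}$) establishes that AXIOM [\textbf{P}] implies its spatial dual, and then the principle of duality in the self-dual framework yields the reverse implication. Your description of the bookkeeping via AXIOM [4] and of the potential pitfall (matching the planar cross-joins to the sections of $\alpha_0, \beta_0, \gamma_0$) is exactly the content of the paper's \textbf{claim} and its subsequent application.
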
 

\begin{proof} 
The argument leading up to the theorem shows that within our point-plane framework, AXIOM [{\bf P}] implies its own dual. By the principle of duality in our framework, it follows that the dual of [{\bf P}] implies [{\bf P}] itself. 
\end{proof} 

\medbreak 

Again, the dual here is spatial. AXIOM [{\bf P}] also has a planar dual: as noted in the course of our argument, this planar dual amounts to [{\bf P}] itself. 

\medbreak 

We close by remarking that, if we were only interested in presenting a suitable version of the projectivity axiom,  it would be arguably better first to define a projectivity of a one-dimensional primitive form and then to adopt the following version: `if a projectivity fixes three of the lines in a flat pencil then it fixes all'. Symbolically: if $O \in \PP$ and $\omega \in \PPi$  are incident, then the flat pencil comprising all lines through $O$ and on $\omega$ is $\Lambda _{O, \omega} = \{ \ell : (O \in \ell_{\PP}) \wedge (\omega \in \ell_{\PPi}) \}$; our version of the projectivity axiom would then state that the only projectivity fixing three elements of $\Lambda_{ O, \omega }$ is the identity. This version is aesthetically superior in being manifestly self-dual. 

\bigbreak 

\begin{center} 
{\small R}{\footnotesize EFERENCES}
\end{center} 
\medbreak 

[1] H.S.M. Coxeter, {\it Projective Geometry}, Second Edition, Springer-Verlag (1987). 

\medbreak 

[2] P.L. Robinson, {\it Projective Space: Lines and Duality}, arXiv 1506.06051 (2015). 

\medbreak 

[3] P.L. Robinson, {\it Projective Space: Reguli and Projectivity}, arXiv 1506.08217 (2015). 

\medbreak 

[4] P.L. Robinson, {\it Projective Space: Points and Planes}, arXiv 1611.06852 (2016). 

\medbreak 

[5] P.L. Robinson, {\it Projective Space: Tetrads and Harmonicity}, arXiv 1612.01913 (2016). 

\medbreak 

[6] O. Veblen and J.W. Young, {\it Projective Geometry}, Volume I, Ginn and Company (1910).

\medbreak

\end{document}